\newcommand{\ack}{\section*{Acknowledgments}}
\newcommand*{\mailto}[1]{\href{mailto:#1}{\nolinkurl{#1}}}
\newcommand{\arxiv}[1]{\href{http://arxiv.org/abs/#1}{arXiv:#1}}
\newcommand{\msc}[1]{\href{http://www.ams.org/msc/msc2010.html?t=&s=#1}{#1}}
\newtheorem{theorem}{Theorem}[section]
\newtheorem{corollary}[theorem]{Corollary}
\newtheorem{lemma}[theorem]{Lemma}
\newtheorem{remark}[theorem]{Remark}
\theoremstyle{definition}
\newtheorem{definition}[theorem]{Definition}
\newcommand{\be}{\begin{equation}}
\newcommand{\ee}{\end{equation}}
\newcommand{\ba}{\begin{array}}
\newcommand{\ea}{\end{array}}
\newcommand{\ol}{\overline}
\numberwithin{equation}{section}
\DeclareMathOperator{\Int}{int}
\newcommand\R{{\mathbb{R}}}
\newcommand\T{{\mathbb{T}}}
\newcommand\Z{{\mathbb{Z}}}
\newcommand\cT{{\mathcal{T}}}
\newcommand\cG{{\mathcal{G}}}
\newcommand\cE{{\mathcal{E}}}
\newcommand\cF{{\mathcal{F}}}
\newcommand\cP{{\mathcal{P}}}
\newcommand\cS{{\mathcal{S}}}
\newcommand\cV{{\mathcal{V}}}
\newcommand\bH{{\mathbf{H}}}
\def\wt#1{{{\widetilde #1} }}
\newcommand\mes{{\rm{mes}}}
\newcommand\comb{{\rm{comb}}}
\begin{document}

\title[Tessellating Quantum Graphs]{Strong Isoperimetric Inequality \\
For Tessellating Quantum Graphs}
	
\author[N. Nicolussi]{Noema Nicolussi}
\address{Faculty of Mathematics\\ University of Vienna\\
Oskar-Morgenstern-Platz 1\\ 1090 Vienna\\ Austria}
\email{\mailto{noema.nicolussi@univie.ac.at}}

\thanks{{\it Research supported by the Austrian Science Fund (FWF) 
under Grants P28807 and W1245.}}

\keywords{Quantum graph, tilling, isoperimetric inequality}
\subjclass[2010]{Primary \msc{34B45}; Secondary \msc{35P15}; \msc{81Q35}}

\begin{abstract}
We investigate isoperimetric constants of infinite tessellating metric graphs. We introduce a curvature-like quantity, which plays the role of a metric graph analogue of discrete curvature notions for combinatorial tessellating graphs. Based on the definition in \cite{kn17}, we then prove a lower estimate and a criterium for positivity of the isoperimetric constant.
\end{abstract}

\maketitle
	
\section{Introduction} 
	{\em Isoperimetric constants}, which relate surface area and volume of sets, are among the most fundamental tools in spectral geometry of manifolds and graphs. They first appeared in this context in \cite{che}, where Cheeger obtained a lower bound on the spectral gap of Laplace--Beltrami operators. For discrete Laplacians on graphs, versions of Cheeger's inequality are known in various settings, e.g. \cite{al, am, bkw15, dod, dk86, fuj, km, moh88, moh92}. They find application in many fields (such as the study of expander graphs and random walks on graphs, we refer to \cite{lub} and \cite{woe} for more information) and consequently, there is a very large interest in graph isoperimetric constants. 
	
	In the case of {\em tessellating graphs} (i.e.\ edge graphs of tessellations of $\R^2$), they have been investigated using certain notions of {\em discrete curvature} (see for example \cite{h01, kp11, oh, woe, zuk}). On the other hand, the idea of plane graph curvature already appears earlier in several unrelated works \cite{g87, ish, s76} and was also employed to describe other geometric properties, for instance discrete analogues of the Gauss--Bonnet formula and the Bonnet--Myers theorem, e.g. \cite{bp01, dVm, HJL, k11, kp11, s76}.   
	
	Another framework for isoperimetric constants are {\em metric graphs} $\cG$, i.e.\ combinatorial graphs $\cG_d = (\cV, \cE)$ with vertex set $\cV$ and edge set $\cE$, where each edge $e \in \cE$ is identified with an interval $I_e = (0, |e|)$ of length $0 < |e|< \infty$ . Topologically, $\cG$ may be considered as a ``network" of intervals glued together at the vertices.  The analogue of the Laplace--Beltrami operator for metric graphs is the {\em Kirchhoff--Neumann Laplacian} $\bH$ (also known as a {\em quantum graph}). It acts as an edgewise (negative) second derivative $f_e \mapsto -\frac{d^2}{dx_e^2} f_e$, $e \in \cE$, and is defined on edgewise $H^2$-functions satisfying continuity and Kirchhoff conditions at the vertices  (we refer to \cite{bcfk06, bk13, ekkst08, post} for more information and references; see also \cite{ekmn, kn17} for the case that $\cG$ is infinite). A well-known result for {\em finite metric graphs} (i.e.\ $\cV$ and $\cE$ are finite sets) is a spectral gap estimate for $\bH$ in terms of an isoperimetric constant due to Nicaise \cite{nic} (see also \cite{kkmm16, kur}).  
	
	In this work, we are interested in {\em infinite metric graphs} (infinitely many vertices and edges). A notion of an isoperimetric constant $\alpha(\cG)$  in this context  was introduced recently in \cite{kn17} (see \eqref{eq:a_G} below for a precise definition) together with the following estimate
		\begin{align}\label{eq:cheeger}
		\frac{1}{4}\alpha(\cG)^2 \leq \lambda_0(\bH) \leq  \frac{\pi^2}{2\, \inf_{e\in \cE} |e| }\alpha(\cG),
		\end{align}
	which holds for every connected, simple, locally finite, infinite metric graph. 
	Here $\lambda_0(\bH) := \inf\, \sigma(\bH)$ is the bottom of the spectrum of $\bH$.

	However, let us stress that explicit computation of isoperimetric constants in general is a difficult problem (known to be NP-hard for combinatorial graphs \cite{moh89}). Hence the question arises, whether one can find bounds on $\alpha(\cG)$ in terms of less complicated quantities. On the other hand, the definition of $\alpha(\cG)$ is purely combinatorial and moreover $\alpha(\cG)$ is related to the isoperimetric constant $\alpha_{\comb} (\cG_d)$ of the combinatorial graph $\cG_d$ (see \cite{kn17} for further details). This strongly suggests to use discrete methods for further study. For combinatorial tessellating graphs, such tools are available in the form of discrete curvature and it is natural to ask whether similar techniques also apply to metric graphs.  Moreover, the class of plane graphs contains important examples such as trees and edge graphs of regular tessellations. 
	
	Motivated by this, the subject of our paper are isoperimetric constants of {\em infinite tessellating metric graphs} (see Definition \ref{def:tess}). Our main contribution is the definition of a {\em characteristic value} $c(\cdot)$ of the edges of a given metric graph (see \eqref{eq:charval}), which takes over the role of the classical discrete curvature  (up to sign convention; as opposed to e.g. \cite{ h01, k11,  kp11}, our results on $\alpha(\cG)$ are formulated in terms of positive curvature). In the simple case of equilateral metric graphs (i.e.\ $|e| = 1$ for all $e \in \cE$), $c$ coincides with the characteristic edge value introduced by Woess in \cite{woe98}. Moreover, for a finite tessellating metric graph (Corollary \ref{cor:GB}),		
	\begin{equation} 
	\sum_{e \in \cE} - c(e)|e| = 1,
\end{equation}
which can be interpreted as a metric graph analogue of the combinatorial Gauss--Bonnet formula known in the discrete case (see e.g. \cite{k11}).

	In terms of these characteristic values, we then formulate our two main results: Theorem \ref{prop:average} contains a criterium for positivity of $\alpha(\cG)$ based on the averaged value of $c(\cdot)$ on large subgraphs $\wt \cG \subset \cG$. In Theorem \ref{prop:est}, we obtain explicit lower bounds on $\alpha(\cG)$. A simplified version of this estimate is the following inequality:
	\begin{equation}
	\alpha(\cG) \geq \inf_{e \in \cE} c(e). 
	\end{equation}
	Theorem \ref{prop:est} can be interpreted as a metric graph analogue of the estimate in \cite[Theorem 1]{kp11} and a result by McKean in the manifold case \cite{mckean}. 
	
	Finally, we demonstrate the use of our theory by examples. First, we consider the case of equilateral  $(p,q)$-regular graphs. Here, $\alpha(\cG)$ is closely related to $\alpha_{\comb} (\cG_d)$  and hence can be computed explicitly. It turns out for large $p$ and $q$,  the estimate in Theorem \ref{prop:est} is quite close to the actual value. Second, we show how to construct an example where $\alpha(\cG)$ and $\alpha_{\comb} (\cG_d)$ behave differently.

	Let us finish the introduction by describing the structure of the paper.
	In Section 2, we recall a few basic notions and give a precise definition of infinite tessellating metric graphs. Moreover, we review the definition of $\alpha(\cG)$ and define the characteristic values. Section 3 contains our main results and proofs. In the final section, we consider examples. 	
	
	\section{Preliminaries}\label{sec:Prelim}
	
	\subsection{(Combinatorial) planar graphs}\label{ss:II.01}
	Let $\cG_d = (\cV, \cE)$ be an infinite, unoriented graph with countably infinite sets of vertices $\cV$ and edges $\cE$.  For a vertex $v \in \cV$, we set
	\be\label{eq:E_v}
	\cE_v := \{e \in \cE | \; e \text{  is incident to } v\}
	\ee
and denote by $\deg(v) := \# \cE_v$ the {\em combinatorial degree} of $v \in \cV$. Throughout the paper, $\# A$ denotes the number of elements of a given  set $A$. We will always assume that $\cG_d$ is connected, simple (no loops or multiple edges) and  locally finite ($\deg(v) < \infty$ for all $v \in \cV$). 

Moreover, we suppose that $\cG_d$ is {\em planar} and consider a fixed planar embedding. Hence $\cG_d$ can be regarded as a subset of the plane $\R^2$, which we assume to be closed. Denote by $\cT$ the set of {\em faces} of $\cG_d$, i.e.\ the closures of the connected components of $\R^2 \setminus \cG_d$. Note that unbounded faces of $\cG_d$ are included in $\cT$ as well. Motivated by the next definition, we will refer to the elements $T \in \cT$ as the {\em tiles} of $\cG_d$. 
  
\begin{definition}  \label{def:tess}
A planar graph $ \cG_d$ is {\em tessellating} if the following additional conditions hold:
\begin{enumerate}[label=(\roman*), ref=(\roman*), leftmargin=*, widest=iiii]
		\item $\cT$ is locally finite, i.e.\ each compact subset $K$ in $\R^2$ intersects only finitely many tiles.
		\item Each bounded tile $T \in \cT$ is a closed topological disc and its boundary $\partial T$ consists of a finite cycle of at least three edges.
		\item Each unbounded tile $T \in \cT$ is a closed topological half-plane and its boundary $\partial T$ consists of a (countably) infinite chain of edges.
		\item Each edge $e \in \cE$ is contained in the boundary of precisely two different tiles. 
		\item Each vertex $v \in \cV$ has degree $\geq 3$.
	\end{enumerate}
\end{definition}
Here, a subset $A \subseteq \R^2$ is called a closed topological disc (half-plane) if it is the image of the closed unit ball in $\R^2$ (the closed upper half-plane) under a homeomorphism $\phi \colon \R^2 \to \R^2$. For a tile $T \in \cT$, we define 
\begin{align}\label{eq:E_T}
\cE_T & := \{e \in \cE | \;  e \subseteq \partial T\}, & d_{\cT}(T) & := \# \cE_T,
\end{align}
where the latter is called the {\em degree of a tile} $T\in\cT$. Notice that according to Definition \ref{def:tess}(ii), $d_{\cT}(T) \geq 3$ for all $T \in \cT$.  

The above assumptions (i)--(v) imply that $\cT$ is a {\em locally finite tessellation} of $\R^2$, i.e.\ a locally finite, countable family of closed subsets $T \subset \R^2$ such that the interiors are pairwise disjoint and $\bigcup_{T \in \cT} T = \R^2$. In addition, $\cG_d = (\cV, \cE)$ coincides with the edge graph of the tessellation in the following sense: by calling a connected component of the intersection of at least two tiles $T \in \cT$ a $\cT$-vertex, if it has only one point and a $\cT$-edge otherwise, we recover the vertex and edge sets $\cV$ and $\cE$. 
	
For a finite subgraph $\wt \cG \subset \cG_d$, let $ \cF(\wt \cG)$ be the set of {\em bounded faces} of $\wt \cG$, i.e.\ the closures of all bounded, connected components of $\R^2 \backslash \wt \cG$. By local finiteness, each bounded face of $\wt \cG$ is a finite union of bounded tiles $T \in \cT$. Moreover, define $ \cP(\wt \cG)$ as the set of tiles $T \in \cT$ with $\partial T \subseteq  \wt   \cG$. Note that always
	\[  
	\cP(\wt \cG) \subseteq \cF(\wt \cG).
	\]
	
\subsection{Metric graphs} \label{ss:II.02}
After assigning each edge $e \in \cE$ a finite length $|e|\in (0,\infty)$, we obtain a {\em metric graph} $\cG:=(\cV,\cE,|\cdot| )= (\cG_d,|\cdot|)$. For a subgraph $\wt \cG = (\wt \cV, \wt \cE) \subset \cG$, define its {\em boundary vertices} by
	\begin{equation}\label{eq:bdG}
		\partial_\cG \wt{\cG} 	:= \big\{v\in \widetilde{\cV} |\  \deg_{\wt\cG}(v)<\deg(v)\big\},
	\end{equation}
	where $\deg_{\wt \cG}(v)$ denotes the degree of a vertex $v \in \wt \cV$ with respect to $\wt \cG$. For a given finite subgraph $\wt\cG\subset \cG$ we then set 
\be\label{eq:deg_wtG}
\deg(\partial_\cG \wt{\cG}) := \sum_{v\in\partial_\cG \wt{\cG}} \deg_{\wt\cG}(v).
\ee
Following \cite{kn17}, the {\em isoperimetric constant}  of a metric graph $\cG$ is then defined by
	\be\label{eq:a_G}
	\alpha(\cG) := \inf_{\wt{\cG}} \; \frac{ \deg (\partial_\cG \wt{\cG}) }{\mes ( \wt{\cG} ) } \in [0, \infty),
	\ee
	where the infimum is taken over all finite, connected subgraphs $\wt \cG \subset \cG$ and $\mes(\wt\cG)$ denotes the Lebesgue measure of $\wt\cG$, $ \mes(\wt\cG) := \sum_{e\in\wt\cE} |e|$. We will say that the metric graph $\cG$ satisfies the {\em strong isoperimetric inequality} if $\alpha(\cG)>0$. 
		
	Recall that for a combinatorial graph $\cG_d= (\cV, \cE)$ the {\em (combinatorial) isoperimetric constant} $\alpha_{\comb}(\cG_d)$ is defined by (see, e.g., \cite{dk86})
\be\label{eq:a_comb}
	\alpha_{\comb}(\cG_d) = \inf_{U \subset \cV} \frac{\# \{e \in \cE| \; e \text{ connects } U \text{ and } \cV \setminus U \}}{ \sum_{v \in U}\deg(v)},
\ee
where the infimum is taken over all finite subsets $U \subset \cV$. There is a close connection between $\alpha_{\comb}(\cG_d)$ and $\alpha(\cG)$ and we refer for further details to \cite{kn17}.

	We also need the following quantities. The {\em weight} $m(v)$ of a vertex $v \in \cV$ is given by
	\be\label{eq:def_m}
		m(v) = \sum_{e \in \cE_v} |e|.
	\ee
 Clearly, $m(v)$ equals the Lebesgue measure (the ``length") of the star $\cE_v$. 
 	The {\em perimeter} $p(T)$ of a tile $T \in \cT$ is defined as
	\begin{align}\label{eq:def_p}
		p(T) := \begin{cases} \sum_{e \in \cE_T} |e|, &   T \in \cT\ \text{is  bounded} \\  \infty, & T \in \cT \ \text{is unbounded} \end{cases}.
	\end{align}

For every $e\in\cE$, we define its {\em characteristic value} $c(e)$  by
	\begin{align}\label{eq:charval}
		c(e) := \frac{1} { |e| }- \sum_{v: v \in e} \frac{1} {m(v)} - \sum_{T: e \subseteq \partial T} \frac{1}{p(T)}.
	\end{align}
Here we employ the convention that whenever $\infty$ appears in a denominator, the corresponding fraction $1/p$ has to be interpreted as zero if $p$ is infinite. Let us mention that for equilateral metric graphs $\cG$ (i.e.\ $|e| \equiv 1$ for all $e \in \cE$), the characteristic value $c(e)$ coincides with the characteristic edge value introduced in \cite{woe98} in the context of combinatorial graphs. 
	
Finally, we need the following class of subgraphs of $\cG$.  
A subgraph $\wt \cG =(\wt \cV, \wt \cE) \subset \cG$ is called {\em star-like}, if it can be written as a finite, connected union of stars. More precisely,
	\[ \wt \cE = \bigcup_{v \in \wt U}\cE_v  \]
	for some finite, connected vertex set $\wt U \subseteq \wt{\cV}$.
	
	Also, for a finite subgraph $\wt \cG \subset \cG$, we define its {\em interior graph} $\wt \cG_{\Int} = (\wt \cV_{\Int},\wt  \cE_{\Int} ) $ as the set of interior vertices $v \in \wt \cV_{\Int} := \wt \cV \setminus \partial \wt \cG$ together with all edges between such vertices.  We say that $\wt \cG$ is {\em complete}, if $ \cF(\wt \cG_{\Int}) = \cP(\wt \cG_{\Int})$, or equivalently if every bounded face of $\wt \cG_{\Int}$ consists of exactly one tile $T \in \cT$. Let us denote the class of star-like complete subgraphs by $\cS(\cG)$.
	
\section{Strong isoperimetric inequality for tessellating quantum graphs}\label{sec:main}
		
Now we are in position to formulate our main results. Our first theorem relates the positivity of the isoperimetric constant with the positivity of the characteristic values of a metric graph. 

\begin{theorem} \label{prop:average}
	Let $\cG = (\cV, \cE, |\cdot|)$ be a tessellating metric graph having infinite volume, $\mes(\cG) = \sum_{e\in\cE}|e|=\infty$. If 
	\be\label{eq:ellast}
	\ell^\ast(\cG):= \sup_{e \in \cE} |e| < \infty
	\ee
	 and
		\be\label{eq:aver_c}	
		\liminf_{\substack{\mes(\wt \cG) \to \infty }} \frac{1}{ \mes(\wt \cG)}\sum_{e \in \wt \cE} c(e) |e|
		= \liminf_{\substack{\mes(\wt \cG) \to \infty }} \frac{\sum_{e \in \wt \cE} c(e) |e|}{\sum_{e \in \wt \cE} |e|}
		    > 0,
		\ee
	then $\alpha(\cG) >0$. Here $\liminf$ is taken over all star-like complete subgraphs $\wt \cG\in\cS(\cG)$.
\end{theorem}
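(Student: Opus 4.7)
The plan is to proceed in three steps: derive a Gauss--Bonnet-type upper bound for $\sum_{e\in\wt\cE} c(e)|e|$ on the class $\cS(\cG)$; combine this with hypothesis \eqref{eq:aver_c} to obtain positivity of the isoperimetric ratio restricted to $\cS(\cG)$; and finally reduce an arbitrary finite connected subgraph to one in $\cS(\cG)$ using $\ell^*(\cG) < \infty$.

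\smallskip
\noindent\textit{Step 1 (Gauss--Bonnet bound on $\cS(\cG)$).} Substituting \eqref{eq:charval} and interchanging the order of summation,
\begin{equation*}
\sum_{e \in \wt\cE} c(e)|e|
  = |\wt\cE| - \sum_{v \in \wt\cV} \frac{1}{m(v)} \sum_{e \in \wt\cE \cap \cE_v} |e|
  - \sum_{T \in \cT} \frac{1}{p(T)} \sum_{e \in \wt\cE \cap \cE_T} |e|.
\end{equation*}
For $\wt\cG \in \cS(\cG)$, every interior vertex $v \in \wt\cV_{\Int}$ contributes exactly $1$ to the vertex sum (since $\wt\cE \cap \cE_v = \cE_v$) and every tile $T \in \cP(\wt\cG_{\Int})$ contributes exactly $1$ to the tile sum; the remaining boundary contributions are nonnegative. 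Discarding them and applying Euler's formula to the planar graph $\wt\cG_{\Int}$, whose bounded faces coincide with $\cP(\wt\cG_{\Int})$ by completeness, the counting identity $|\cP(\wt\cG_{\Int})| = |\wt\cE_{\Int}| - |\wt\cV_{\Int}| + k$ (with $k \geq 0$ the number of connected components) gives
\begin{equation*}
\sum_{e \in \wt\cE} c(e)|e|
  \leq |\wt\cE| - |\wt\cV_{\Int}| - |\cP(\wt\cG_{\Int})|
  = |\wt\cE \setminus \wt\cE_{\Int}| - k
  \leq \deg(\partial_\cG \wt\cG),
\end{equation*}
since each edge in $\wt\cE \setminus \wt\cE_{\Int}$ contributes at least once to $\deg(\partial_\cG \wt\cG)$.

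\smallskip
\noindent\textit{Step 2 (Positivity on $\cS(\cG)$).} Hypothesis \eqref{eq:aver_c} produces $\gamma > 0$ and $M_0 > 0$ such that $\sum_{e \in \wt\cE} c(e)|e| \geq \gamma\, \mes(\wt\cG)$ for every $\wt\cG \in \cS(\cG)$ with $\mes(\wt\cG) \geq M_0$. Combined with Step 1, this forces $\deg(\partial_\cG \wt\cG)/\mes(\wt\cG) \geq \gamma$ on such subgraphs. For the remaining star-like complete subgraphs, $\deg(\partial_\cG \wt\cG) \geq 1$ while $\mes(\wt\cG) < M_0$, giving ratio $\geq 1/M_0$. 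Hence
\begin{equation*}
\inf_{\wt\cG \in \cS(\cG)} \frac{\deg(\partial_\cG \wt\cG)}{\mes(\wt\cG)} \geq \min\bigl(\gamma, 1/M_0\bigr) > 0.
\end{equation*}

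\smallskip
\noindent\textit{Step 3 (Reduction to $\cS(\cG)$).} Given an arbitrary finite connected subgraph $\wt\cG$, I would associate to it a star-like complete subgraph $\wt\cG^* \in \cS(\cG)$ whose isoperimetric ratio controls that of $\wt\cG$ up to a multiplicative constant depending only on $\ell^*$ and the graph. A shrinking approach appears most robust: extract a connected component of the interior vertex set, form the union of stars at these vertices, and fill in to ensure completeness. Bounded edge length forces the resulting measure and boundary differences to scale with $\ell^*\cdot\deg(\partial_\cG \wt\cG)$, transferring the positive lower bound from $\cS(\cG)$ to all finite connected subgraphs and yielding $\alpha(\cG) > 0$.

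\smallskip
\noindent\textit{Main obstacle.} Step 3 is the delicate part of the argument. Without uniform bounds on vertex or tile degrees, a fattening enlargement of $\wt\cG$ may add arbitrarily many edges at a single boundary vertex and destroy boundary control. A shrinking construction sidesteps this but requires checking both that the extracted subgraph retains most of the measure of $\wt\cG$ and that its boundary degree stays comparable to $\deg(\partial_\cG \wt\cG)$. Degenerate cases---empty or disconnected interior vertex set, or bounded faces of the interior containing more than one tile---require separate handling, all ultimately controlled by $\ell^* < \infty$.
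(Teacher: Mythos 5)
Your Steps 1 and 2 are correct and coincide with the paper's own argument: Step 1 is exactly Lemma~\ref{lem:degsum} (the bound $\sum_{e\in\wt\cE}c(e)|e|\le\deg(\partial_\cG\wt\cG)$ for $\wt\cG\in\cS(\cG)$, obtained from Euler's formula for $\wt\cG_{\Int}$ together with completeness), and Step 2 is the paper's concluding observation that $\deg(\partial_\cG\wt\cG)\ge 1$ takes care of the star-like complete subgraphs of small measure. The genuine gap is Step 3. The reduction from arbitrary finite connected subgraphs to $\cS(\cG)$ is precisely Lemma~\ref{lem:reduction} of the paper, namely $\alpha(\cG)=\min\{2/\ell^\ast(\cG),\alpha_\cS(\cG)\}$; it carries real content and you leave it as a sketch. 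Moreover, the mechanism you propose is not the one that works: no comparability of the form ``measure and boundary differences scale with $\ell^\ast\cdot\deg(\partial_\cG\wt\cG)$'' is needed, and the requirement that the extracted subgraph ``retain most of the measure'' of $\wt\cG$ is in general false and, fortunately, irrelevant.

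The correct argument runs as follows. Given a finite connected $\wt\cG$, split $\wt\cV_{\Int}$ into connected components $\cV_1,\dots,\cV_n$ and set $\cE_i=\bigcup_{v\in\cV_i}\cE_v$; the leftover edges $\cE_r=\wt\cE\setminus\bigcup_i\cE_i$ have both endpoints in $\partial_\cG\wt\cG$. Since $\mes(\wt\cG)=\sum_i\mes(\cG_i)+\sum_{e\in\cE_r}|e|$ and $\deg(\partial_\cG\wt\cG)=\sum_i\deg(\partial_\cG\cG_i)+2\#\cE_r$, the mediant inequality gives
\begin{equation*}
\frac{\deg(\partial_\cG\wt\cG)}{\mes(\wt\cG)}\;\ge\;\min\Big\{\min_i\frac{\deg(\partial_\cG\cG_i)}{\mes(\cG_i)},\;\min_{e\in\cE_r}\frac{2}{|e|}\Big\},
\end{equation*}
and this is the only place where $\ell^\ast(\cG)<\infty$ enters: each leftover edge contributes a ratio $2/|e|\ge 2/\ell^\ast(\cG)>0$. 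Each star-like piece $\cG_i$ is then \emph{enlarged} to a complete one by adjoining all stars at vertices lying inside bounded faces of its interior graph. The point that resolves your stated ``main obstacle'' is that every vertex incident to an added edge lies in the closure of such a bounded face and therefore attains full degree in the enlarged graph; hence the boundary vertex set can only shrink, $\deg(\partial_\cG\widehat\cG)\le\deg(\partial_\cG\cG_i)$, while the measure only grows, so the isoperimetric ratio decreases monotonically under completion and no quantitative control of the added mass is required. Without carrying out this lemma, the positivity you establish on $\cS(\cG)$ does not transfer to $\alpha(\cG)$, so the proof as written is incomplete.
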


\begin{remark}
A few remarks are in order.
\begin{enumerate}[label=(\roman*), ref=(\roman*), leftmargin=*, widest=iiii]
\item[(i)] Let us mention that \eqref{eq:ellast} is necessary for the strong isoperimetric inequality to hold for an arbitrary metric graph since (see, e.g., \cite[Remark 3.3]{kn17})
\be\label{eq:estSol}
\alpha(\cG)  \le \frac{2}{\ell^\ast(\cG)}.
\ee
\item[(ii)] If $\mes(\cG) = \sum_{e\in\cE}|e| < \infty$, then the lower bound
\begin{equation} \label{eq:estvol}
	\alpha(\cG) \geq \frac{2}{\mes(\cG)}>0
\end{equation}
holds. In fact, if $\cG$ is tessellating, then $\deg (\partial \wt \cG) \ge 2$ for every finite subgraph $\wt \cG \subset \cG$ and \eqref{eq:estvol} follows immediately from \eqref{eq:a_G}. 
\item[(iii)] Theorem \ref{prop:average} can be seen as the analogue of \cite[Theorem 1]{woe98}. 
\item[(iv)] As we will see below, the proof of Theorem \ref{prop:average} implies the explicit estimate
\begin{equation} \label{eq:est01}
	\alpha (\cG) \geq \min \Big\{\frac{2}{\ell^\ast(\cG)}, \; \ \inf_{\wt \cG \in \cS}  \frac{1}{ \mes(\wt \cG)}\sum_{e \in \wt \cE} c(e) |e|  \Big\},
\end{equation}
however, the conditions in Theorem \ref{prop:average} are weaker than positivity of the right-hand side in \eqref{eq:est01}.
\end{enumerate}
\end{remark}

The next result shows that pointwise estimates for the characteristic values also yield lower estimates for the isoperimetric constant. To this end, introduce 
the following quantities
\begin{align}
M(\cG) & := \sup_{v\in\cV} \frac{m(v)}{\inf_{e\in\cE_v}|e|}, & P(\cG) & := \sup_{T \in \cT} \frac{p(T)}{\inf_{e\in \cE_T}|e|},
\end{align}
and set
\be\label{eq:K_G}
	 K  (\cG) := 1-  \frac{1}{M(\cG)}  -  \frac{2}{P(\cG)} - \frac{1}{(M(\cG) -2) P(\cG)}.
	\ee

	\begin{theorem} \label{prop:est}
	Let $\cG = (\cV, \cE, |\cdot|)$ be a tessellating metric graph. Suppose
	\be\label{eq:ce_ast}
	c_\ast(\cG) := \inf_{e \in \cE} c(e) > 0.
        \ee
        Then
	\be\label{eq:iso_02}
		\alpha(\cG) \geq    \frac{c_\ast (\cG)}{K (\cG)}  > 0. 
	\ee
\end{theorem}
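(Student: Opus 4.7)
The plan is to establish, for every finite connected subgraph $\wt\cG = (\wt\cV, \wt\cE) \subset \cG$, the pointwise estimate
\[ c_\ast(\cG) \cdot \mes(\wt\cG) \;\le\; K(\cG) \cdot \deg(\partial_\cG \wt\cG), \]
from which \eqref{eq:iso_02} follows upon taking the infimum in \eqref{eq:a_G}. My starting point is the identity obtained by multiplying \eqref{eq:charval} by $|e|$, summing over $e \in \wt\cE$, and interchanging the order of summation:
\[ \sum_{e \in \wt\cE} c(e)|e| \;=\; \#\wt\cE \;-\; \sum_{v \in \wt\cV} \frac{m_{\wt\cG}(v)}{m(v)} \;-\; \sum_{T} \frac{m_{\wt\cG,T}}{p(T)}, \]
where $m_{\wt\cG}(v) := \sum_{e \in \cE_v \cap \wt\cE}|e|$, $m_{\wt\cG,T} := \sum_{e \in \cE_T \cap \wt\cE}|e|$, and unbounded tiles contribute zero by convention. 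Since $c(e) \ge c_\ast(\cG)$ by hypothesis, the left-hand side is at least $c_\ast(\cG)\,\mes(\wt\cG)$; the task is thus to bound the right-hand side from above by $K(\cG)\deg(\partial_\cG\wt\cG)$.

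Next, I would estimate the vertex and tile sums from below by separating interior and boundary contributions. An interior vertex contributes exactly $1$ to the vertex sum, while a boundary vertex $v \in \partial_\cG\wt\cG$ satisfies
\[ \frac{m_{\wt\cG}(v)}{m(v)} \;\ge\; \frac{\deg_{\wt\cG}(v)\cdot \inf_{e\in\cE_v}|e|}{m(v)} \;\ge\; \frac{\deg_{\wt\cG}(v)}{M(\cG)}, \]
so the vertex sum is bounded below by $(\#\wt\cV - \#\partial_\cG\wt\cG) + \deg(\partial_\cG\wt\cG)/M(\cG)$. An entirely parallel estimate, with $P(\cG)$ replacing $M(\cG)$, handles the tile sum: each $T \in \cP(\wt\cG)$ contributes exactly $1$, while a bounded tile $T \notin \cP(\wt\cG)$ with $\cE_T \cap \wt\cE \neq \emptyset$ contributes at least $\#(\cE_T \cap \wt\cE)/P(\cG)$.

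Substituting these bounds into the key identity, invoking Euler's formula $\#\wt\cV - \#\wt\cE + \#\cF(\wt\cG) = 1$ for the connected planar subgraph $\wt\cG$ (together with $\cP(\wt\cG) \subseteq \cF(\wt\cG)$), and exploiting the double-counting identity $2\#\wt\cE = \sum_T \#(\cE_T\cap\wt\cE)$ rearranges the remaining task into a purely combinatorial estimate. The specific shape of $K(\cG)$ is supposed to emerge as follows: the term $1/M(\cG)$ already appears from the boundary-vertex bound; the factor $2/P(\cG)$ comes from the fact that each boundary edge is contained in at most two incomplete tiles; and the correction $1/((M(\cG)-2)P(\cG))$ accounts for the topological defect $\#\cF(\wt\cG) - \#\cP(\wt\cG)$ of non-tile bounded faces of $\wt\cG$, whose missing edges must emerge at interior vertices of full degree $\ge 3$, leaving at most $M(\cG)-2$ of their incident edges to be charged. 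The main obstacle is precisely this combinatorial accounting: carefully distributing the defect $\#\cF(\wt\cG)-\#\cP(\wt\cG)$ and the boundary-vertex count $\#\partial_\cG\wt\cG$ among the three correction terms of $K(\cG)$, and charging each non-tile face to a suitable portion of $\partial_\cG\wt\cG$, so that the claimed constants emerge cleanly.
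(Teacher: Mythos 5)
Your opening identity and the term-by-term estimates via $M(\cG)$ and $P(\cG)$ are sound: combined with Euler's formula they essentially reproduce the paper's Lemma \ref{lem:degsum} and yield the weaker bound $\alpha(\cG)\ge c_\ast(\cG)$ of \eqref{eq:estbasic}, i.e.\ the portion $1-\tfrac{1}{M(\cG)}-\tfrac{2}{P(\cG)}$ of $K(\cG)$ (cf.\ \eqref{eq:tech}). The genuine gap is exactly the step you defer as ``combinatorial accounting'': producing the remaining term $\tfrac{1}{(M(\cG)-2)P(\cG)}$ is the core of the proof, and it cannot be obtained by a single direct summation over an arbitrary finite connected subgraph. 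First, you attribute this term to the face defect $\#\cF(\wt\cG)-\#\cP(\wt\cG)$, but that is not where it comes from: the paper first reduces to \emph{star-like complete} subgraphs (Lemma \ref{lem:reduction}) precisely so that Euler's formula, applied to the \emph{interior} graph $\wt\cG_{\Int}$, cancels the face/tile defect exactly ($\cF(\wt\cG_{\Int})=\cP(\wt\cG_{\Int})$). Skipping this reduction leaves you with edges having both endpoints in $\partial_\cG\wt\cG$ and with bounded faces that are unions of many tiles, which makes the bookkeeping harder, not cleaner. (Also, your heuristic that each boundary edge lies in ``at most two incomplete tiles'' points the wrong way: what is needed is a \emph{lower} bound on the incomplete-tile sum, namely that each edge incident to a boundary vertex lies in exactly two tiles, neither of which can be fully interior.)

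Second, even after the reduction, the term $\tfrac{1}{(M(\cG)-2)P(\cG)}$ is not produced by distributing a topological defect. It comes from a planarity argument: for each interior vertex $v$ adjacent to the boundary, the extremal interior edges $e_\pm$ in the cyclic order around $v$ lie on tiles that are not fully interior, giving $\delta(v)\ge 1$; and this only suffices when every interior vertex has at most $\deg^\ast(\cG)-2$ boundary neighbours. The remaining configurations are handled by an induction on $\#\wt\cV_{\Int}$ with a cutting step, which in turn relies on the auxiliary inequality \eqref{eq:fundamentalest}, $c_\ast(\cG)/K(\cG)\le\frac{\deg^\ast(\cG)-2}{\deg^\ast(\cG)-1}\frac{1}{\ell^\ast(\cG)}$ --- the same inequality that absorbs the $2/\ell^\ast(\cG)$ branch arising from single-edge subgraphs, a case your term-by-term bounds do not automatically cover. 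Your proposal contains no substitute for the reduction, the induction, or this auxiliary inequality, so as written it establishes only $\alpha(\cG)\ge c_\ast(\cG)$, not \eqref{eq:iso_02}.
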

Theorem \ref{prop:est}  can be considered as the metric graph analogue of the corresponding estimate for combinatorial graphs in \cite[Theorem 1]{kp11}.
\begin{remark} \label{rem:K}
The following obvious estimates
\begin{align} \label{est:Mdeg}
M(\cG) & \ge \sup_{v\in\cV} \deg(v) \ge 3, & P(\cG) & \ge \sup_{T\in\cT} d_\cT(T) \ge 3,
\end{align}
imply that $K(\cG) \le 1$. 
Moreover, one can show that $K(\cG) > 0$ if $c_\ast(\cG) > 0$. Indeed, noting that
\begin{align*}
m(v) & \le  \deg(v)\ell^\ast(\cG), & p(T) & \le  d_\cT(T)\ell^\ast(\cG),
\end{align*}
we easily get the following rough estimate
\begin{align}\label{eq:c_ast_est1}
	c_\ast (\cG) &\leq 
		 \frac{1}{\ell^\ast (\cG)}   \Big( 1 - \frac{2}{ \deg^\ast (\cG) } -  \frac{2}{   d_\cT^\ast (\cG) }    \Big),
\end{align}
where $\deg^\ast (\cG)  :=  \sup_{v\in\cV} \deg(v)$ and $  d_\cT^\ast (\cG)   :=  \sup_{T\in\cT} d_\cT(T)$.
On the other hand,
\begin{align*}
	K(\cG) & =  \frac{1}{2}\Big(1 -  \frac{2}{M(\cG)}  -  \frac{2}{P(\cG)}\Big)+ \frac{1}{2} -  \frac{1}{P(\cG)} - \frac{1}{(M(\cG) -2) P(\cG)} \\
	&\ge \frac{1}{2}\Big( 1 - \frac{2}{ \deg^\ast (\cG) } -  \frac{2}{   d_\cT^\ast (\cG)  }    \Big)  + \frac{1}{2} - \frac{1}{   d_\cT^\ast (\cG) } - \frac{1}{( \deg^\ast (\cG)- 2)  d_\cT^\ast (\cG) }.
\end{align*}
If $c_\ast(\cG) > 0$, then so is the right-hand side in \eqref{eq:c_ast_est1} which implies 
\begin{align*}
	K(\cG)  > & \frac{1}{2} - \frac{1}{  d_\cT^\ast (\cG) } - \frac{1}{( \deg^\ast (\cG) - 2)   d_\cT^\ast (\cG) }\\
	 > & \frac{1}{ \deg^\ast (\cG) } - \frac{1}{( \deg^\ast (\cG) - 2)  d_\cT^\ast (\cG) } \ge 0.
\end{align*}
\end{remark}

To prove Theorems \ref{prop:average} and \ref{prop:est}, we first show that we can restrict in \eqref{eq:a_G} to star-like complete subgraphs.
\begin{lemma} \label{lem:reduction}
	Let $\cG = (\cV, \cE, |\cdot|)$ be a tessellating metric graph. Then
	\begin{align} \label{eq:minalpha}
		\alpha(\cG) = \min \Big\{\frac{2}{\ell^\ast(\cG)}, \alpha_\cS(\cG)  \Big\},
	\end{align}
	where 
	\be\label{eq:aS_G}
		\alpha_\cS(\cG) := \inf_{\wt \cG \in \cS }  \frac{\deg ( \partial \wt \cG)}{ \mes(\wt \cG)}.
	\ee
	\end{lemma}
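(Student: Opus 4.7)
The plan is to establish $\alpha(\cG)=\min\{2/\ell^\ast(\cG),\alpha_\cS(\cG)\}$ by proving two inequalities. The upper bound follows immediately from \eqref{eq:a_G}: since $\cS(\cG)$ is a subclass of the finite, connected subgraphs, $\alpha(\cG)\le\alpha_\cS(\cG)$; and testing \eqref{eq:a_G} on the single-edge subgraph $\wt\cG=\{e\}$, $e\in\cE$, yields $\deg(\partial_\cG\{e\})=2$ (every endpoint has $\deg_{\{e\}}=1<\deg(\cdot)$) and $\mes(\{e\})=|e|$, whence $\alpha(\cG)\le 2/\ell^\ast(\cG)$ after taking the infimum over $e$.

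For the reverse direction, I would fix an arbitrary finite, connected $\wt\cG\subset\cG$ with ratio $r:=\deg(\partial_\cG\wt\cG)/\mes(\wt\cG)$ and show $r\ge\min\{2/\ell^\ast(\cG),\alpha_\cS(\cG)\}$. If every vertex of $\wt\cG$ is a boundary vertex, then by the handshaking lemma $\deg(\partial_\cG\wt\cG)=\sum_{v\in\wt\cV}\deg_{\wt\cG}(v)=2|\wt\cE|$, while $\mes(\wt\cG)\le|\wt\cE|\ell^\ast(\cG)$, giving $r\ge 2/\ell^\ast(\cG)$. Otherwise $\wt\cG$ has at least one interior vertex, and I may additionally assume $r<2/\ell^\ast(\cG)$ (else the bound is trivial) and shall construct $\wt\cG'\in\cS(\cG)$ with ratio $\le r$ via three ratio non-increasing reductions.

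First, \emph{pruning}: delete every edge $e\in\wt\cE$ with both endpoints in $\partial_\cG\wt\cG$. Each deletion changes $(\deg(\partial_\cG\cdot),\mes(\cdot))$ by $(-2,-|e|)$, and the inequality $(n-2)/(m-|e|)\le n/m$ is equivalent to $r\le 2/|e|$, which follows from $r<2/\ell^\ast(\cG)\le 2/|e|$. Write $\wt\cG^\ast$ for the pruned subgraph; by construction every remaining edge has at least one interior endpoint. Second, \emph{component selection}: let $U_1,\dots,U_k$ be the connected components of the subgraph induced on the interior vertices of $\wt\cG$, and set $\wt\cG_i^\ast:=\bigcup_{v\in U_i}\cE_v$. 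The edge sets $\wt\cE_i^\ast$ partition $\wt\cE^\ast$, and a short bookkeeping (shared boundary vertices have their degree in $\wt\cG^\ast$ split across the $\wt\cG_i^\ast$ in which they appear) yields $\sum_i\deg(\partial_\cG\wt\cG_i^\ast)=\deg(\partial_\cG\wt\cG^\ast)$ and $\sum_i\mes(\wt\cG_i^\ast)=\mes(\wt\cG^\ast)$, so the ratio of $\wt\cG^\ast$ is a weighted average of those of the $\wt\cG_i^\ast$; pick the index $i$ minimizing the ratio, whereby $\text{ratio}(\wt\cG_i^\ast)\le\text{ratio}(\wt\cG^\ast)\le r$. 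Each $\wt\cG_i^\ast$ is star-like with core $U_i$ (which is finite, connected by construction). Third, \emph{completion}: if a bounded face of $(\wt\cG_i^\ast)_{\Int}$ contains more than one tile of $\cT$, adjoin the missing edges lying strictly inside that face one tile at a time. Under the standing hypothesis $r<2/\ell^\ast(\cG)$, a computation analogous to the pruning step—relying on the tessellation property that every edge belongs to exactly two tile boundaries—shows that each such filling absorbs existing boundary vertices into the interior without increasing the ratio. The procedure halts at a finite, connected $\wt\cG'\in\cS(\cG)$ with ratio $\le r$.

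The main obstacle I expect is the completion step: adjoining missing edges inside a non-tile face can create new pendant vertices at the far endpoints of added edges, and one has to schedule the tile-by-tile fillings so that the increase in $\deg(\partial_\cG\cdot)$ is controlled by the corresponding increase in $\mes(\cdot)$, while keeping the intermediate subgraphs finite and connected. The rigidity of tessellations (bounded tiles are topological discs with finite cyclic boundaries, each edge is shared by exactly two tiles, $\deg(v)\ge 3$, $d_\cT(T)\ge 3$) together with the assumption $r<2/\ell^\ast(\cG)$ should supply the slack needed for this bookkeeping, but the verification in full generality is the delicate part of the argument.
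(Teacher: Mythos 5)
Your upper bound and the first two reductions are sound and essentially coincide with part (i) of the paper's proof: pruning the edges with both endpoints on the boundary and splitting along the connected components of the interior vertex set is exactly how the paper reduces the infimum in \eqref{eq:a_G} to single edges (which produce the term $2/\ell^\ast(\cG)$) and to star-like subgraphs (via the mediant inequality). The genuine gap is the completion step, which you yourself flag as unresolved. Adjoining tiles one at a time is the wrong mechanism: a single tile-filling can create new boundary vertices at the far endpoints of the added edges, and the resulting increase of $\deg(\partial_\cG\,\cdot\,)$ need not be compensated by the added length, so there is no reason an individual filling is ratio non-increasing; nor is it clear that the intermediate graphs remain star-like, which you need in order to land in $\cS(\cG)$. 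The standing hypothesis $r<2/\ell^\ast(\cG)$ does not visibly supply the required slack, and no workable schedule is exhibited.

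The paper avoids this entirely by completing in one shot rather than tile by tile. Given a star-like $\wt\cG$, it adjoins the full stars $\cE_v$ of \emph{all} vertices $v$ lying in the open bounded faces $f$ of the interior graph $\wt\cG_{\Int}$. Planarity forces every added edge to stay inside $\ol{f}$: its second endpoint either lies again in an open bounded face (and hence also acquires its full star) or lies on $\partial f\subseteq\wt\cG_{\Int}$, i.e.\ is a vertex that is already interior. Consequently every vertex of the closed bounded faces becomes interior in the enlarged graph $\widehat\cG$, the degree of every surviving boundary vertex is unchanged, and $\partial\widehat\cG\subseteq\partial\wt\cG$. Hence $\deg(\partial\widehat\cG)\le\deg(\partial\wt\cG)$ while $\mes(\widehat\cG)\ge\mes(\wt\cG)$, so the ratio does not increase --- with no scheduling, no use of $r<2/\ell^\ast(\cG)$, and with star-likeness and completeness of $\widehat\cG$ verified directly from the construction. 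Replacing your tile-by-tile scheme by this global construction closes the argument.
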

	\begin{proof}
	(i) First, we show that it suffices to consider subgraphs that are either star-like or consist of a single edge. Let $\wt \cG = (\wt \cV, \wt \cE)$ be a finite, connected subgraph of $\cG$ and $\wt \cV_{\Int} = \wt \cV \setminus \partial \wt \cG$. We split $ \wt \cV_{\Int} = \bigcup_{i=1}^n \cV_i$ into a finite, disjoint union of connected vertex sets $\cV_i$. Let $ \cG_i =(\cV_i, \cE_i) \subset \cG$ be the subgraph with edge set
	\[
		 \cE_i = \bigcup_{v \in \cV_i} \cE_v.
	\]
	 By construction, each $ \cG_i$ is star-like and each edge $e \in  \wt \cE$ belongs to at most one $ \cG_i$. Let 
	 $\wt\cE_r = \wt \cE \setminus \bigcup_{i=1}^n \cE_i$ be the remaining edges. Then 
	 \[
	 \mes(\wt \cG) = \sum_{i=1}^n \mes(\cG_i)  + \sum_{e \in  \wt\cE_r} |e|.
	 \]
	Moreover, both vertices of an edge $e \in \wt \cE_r$ are in $\partial \wt \cG$ and $\partial \cG_i = \partial \wt \cG \cap \cV_i$. Hence
	\begin{align*}
	\deg(\partial \wt{\cG}) = \sum_{v \in \partial \wt \cG}  \sum_{i=1}^n \deg_{\cG_i} (v) +  2 \# \cE_r = \sum_{i=1}^n \deg(\partial \cG_i) + 2 \# \cE_r.
	\end{align*}
	This finally implies
	\begin{align*}
		\frac{ \deg(\partial \wt{\cG} )}{\mes(\wt \cG)} = \frac { \sum_{i=1}^n \deg(\partial \cG_i) + 2 \# \cE_r } { \sum_{i=1}^n \mes(\cG_i)  + \sum_{e \in  \cE_r} |e|} \geq \min_{\substack{i=1,...,n, \\ e \in \cE_r}} \left \{  \frac{ \deg(\partial \cG_i  )}{ \mes(\cG_i)}, \frac{2}{ | e |} \right \}.
	\end{align*}

	(ii) To complete the proof, it suffices to construct for every star-like subgraph $\wt \cG $ a star-like, complete subgraph $\widehat \cG \in\cS(\cG)$ with $\widehat \cG  \supseteq   \wt \cG  $ and $ \deg ( \partial  \wt \cG)  \geq  \deg ( \partial  \widehat \cG) $. 
	Let $\wt\cG_{\Int}=(\wt \cV_{\Int}, \wt \cE_{\Int})$ be the interior graph of $\wt \cG$. Denote by $  \cF_0  $ the set of bounded, open components of  $\R^2 \setminus \wt \cG_{\Int}$ and by $\cF = \{  F = \ol{f} | \; f \in \cF_0\}$ the bounded faces of $\wt \cG_{\Int}$. The idea is to add ``edges contained in bounded faces". Define the  subgraph $\widehat \cG = (\widehat \cV, \widehat \cE)$ by its edge set
	\[
		\widehat \cE =  \wt \cE \cup \bigcup_{v \in f\colon f \in \cF_0} \cE_v   .
	\]
	If an edge $e \in \cE$ is incident to a vertex $v \in f$ with $ f \in \cF_0$, then its other vertex lies in $F = \ol{f}$. Hence $\deg_{\widehat \cG} (v) = \deg_{\wt \cG} (v) $ for every vertex $v$ with $v \notin K := \bigcup_{F \in \cF }F$. On the other hand, every vertex $v \in K$ belongs to $\widehat \cG$ and satisfies $\deg_{\widehat \cG} (v) = \deg_{\cG} (v)$. Indeed, if $v \in  F = \ol{f}$, then either $v \in f$ or $v \in \partial f \subseteq \wt \cG_{\Int} $.  
	This implies $\partial \widehat \cG \subseteq \partial \wt \cG$ and $ \deg ( \partial  \wt \cG)  \geq  \deg ( \partial  \widehat \cG) $. 
	Moreover, $\widehat \cE = \cup_{v \in \widehat U} \cE_v$, where 
	\[
		\widehat U = \wt \cV_{\Int} \cup \bigcup_{f \in \cF_0} \{ v \in \cV | \; v \in f \}.
	\]
	 Also, $\widehat U$ is finite by local finiteness and connected since $\wt \cG$ is star-like and $\partial f \subseteq \wt \cG_{\Int}$ for $f \in \cF_0$. It remains to show that $\widehat \cG$ is complete. Let $\widehat F$ be a bounded face of the interior graph $\widehat \cG_{\Int}$. Suppose $T \in \cT$ with $T \subseteq \widehat F$. Then $T \subseteq \widehat F \subseteq F$ for some bounded face $F$ of $\wt \cG_{\Int}$. In particular, $e \subseteq K$ for every edge $e \subseteq \partial T$. But every vertex $ v \in K$ belongs to $\widehat \cG_{\Int}$, and hence $\partial T \subseteq \widehat \cG_{\Int}$ and $\widehat F = T$
\end{proof}

\begin{remark} \label{rem:min}
Combining \eqref{eq:minalpha} with \eqref{eq:estSol}, one concludes that the inequality 
\begin{equation} \label{eq:estmin}
\frac{2}{\ell^\ast(\cG)} \le  \alpha_\cS(\cG) = \inf_{\wt \cG \in \cS }  \frac{\deg ( \partial \wt \cG)}{ \mes(\wt \cG)}
\end{equation}
implies that 
\be
\alpha(\cG) = \frac{2}{\ell^\ast(\cG)}.
\ee
In Example \ref{ex:min}, we provide an explicit construction of a graph satisfying \eqref{eq:estmin}.
\end{remark}

The next lemma contains the connection between $c(e)$ and $\alpha(\cG)$.
\begin{lemma} \label{lem:degsum}
		The following inequality 
		\be\label{eq:isoStar}
			\sum_{e \in \wt \cE} c(e) |e| \leq \deg (\partial_\cG \wt \cG)
		\ee
		holds for any star-like, complete subgraph $\wt \cG\in \cS(\cG)$.		
	\end{lemma}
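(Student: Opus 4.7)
The plan is to expand the definition of $c(e)$, interchange sums, and then recognize the remaining quantity via Euler's formula applied to the interior graph $\wt\cG_{\Int}$.

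First I would rewrite
\[
\sum_{e\in\wt\cE} c(e)|e| \;=\; \#\wt\cE \;-\; \sum_{v\in\wt\cV}\frac{m_{\wt\cG}(v)}{m(v)} \;-\; \sum_{T}\frac{p_{\wt\cG}(T)}{p(T)},
\]
where $m_{\wt\cG}(v):=\sum_{e\in\wt\cE\cap\cE_v}|e|$ and $p_{\wt\cG}(T):=\sum_{e\in\wt\cE\cap\cE_T}|e|$, obtained by interchanging the order of summation between edges and their incident vertices/tiles. The unbounded tiles contribute $0$ because of the convention $1/\infty=0$.

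Next I would use that $\wt\cG$ is star-like: for every interior vertex $v\in\wt U=\wt\cV_{\Int}$ the whole star $\cE_v$ lies in $\wt\cE$, so $m_{\wt\cG}(v)=m(v)$ and the vertex term contributes exactly $1$. Dropping the nonnegative contributions from the boundary vertices $v\in\partial_\cG\wt\cG$ yields
\[
\sum_{v\in\wt\cV}\frac{m_{\wt\cG}(v)}{m(v)} \;\ge\; \#\wt\cV_{\Int}.
\]
Similarly, every tile $T\in\cP(\wt\cG_{\Int})$ satisfies $\partial T\subseteq\wt\cG_{\Int}\subseteq\wt\cG$ and hence $p_{\wt\cG}(T)=p(T)$, so the tile sum is at least $\#\cP(\wt\cG_{\Int})$. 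Putting these together gives
\[
\sum_{e\in\wt\cE} c(e)|e| \;\le\; \#\wt\cE - \#\wt\cV_{\Int} - \#\cP(\wt\cG_{\Int}).
\]

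The main step is to evaluate the right-hand side. Here I invoke Euler's formula for the finite, planar, connected graph $\wt\cG_{\Int}$ (its connectedness is guaranteed by the definition of star-like):
\[
\#\wt\cV_{\Int} - \#\wt\cE_{\Int} + \#\cF(\wt\cG_{\Int}) \;=\; 2,
\]
counting the unbounded face. Completeness of $\wt\cG$ means $\cF(\wt\cG_{\Int})=\cP(\wt\cG_{\Int})\cup\{\text{unbounded face}\}$, so $\#\cP(\wt\cG_{\Int}) = \#\wt\cE_{\Int}-\#\wt\cV_{\Int}+1$. Finally, since $\wt\cG$ is star-like, every edge of $\wt\cE\setminus\wt\cE_{\Int}$ joins an interior vertex to a boundary one, giving $\#\wt\cE-\#\wt\cE_{\Int}=\deg(\partial_\cG\wt\cG)$. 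Substituting,
\[
\#\wt\cE - \#\wt\cV_{\Int} - \#\cP(\wt\cG_{\Int}) \;=\; \deg(\partial_\cG\wt\cG) - 1 \;\le\; \deg(\partial_\cG\wt\cG),
\]
which proves \eqref{eq:isoStar}.

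The only subtle point, and hence the \emph{main obstacle}, is verifying that completeness plus Euler really gives the exact identity $\#\cP(\wt\cG_{\Int}) = \#\wt\cE_{\Int}-\#\wt\cV_{\Int}+1$: this is where the hypothesis $\wt\cG\in\cS(\cG)$ (rather than merely star-like) enters, and without it one would only get an inequality in the wrong direction. Everything else is a straightforward bookkeeping exercise once the edge term $c(e)|e|$ is rewritten by interchanging sums.
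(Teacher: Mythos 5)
Your proof is correct and follows essentially the same route as the paper's: expand $c(e)|e|$ and interchange sums, use star-likeness to make each interior vertex contribute exactly $1$, use completeness together with Euler's formula to identify $\#\cP(\wt\cG_{\Int})=\#\wt\cE_{\Int}-\#\wt\cV_{\Int}+1$, and use star-likeness again to get $\#\wt\cE-\#\wt\cE_{\Int}=\deg(\partial_\cG\wt\cG)$. The only cosmetic difference is that the paper keeps the dropped boundary-vertex and partial-tile terms explicit (it reuses them later to derive the sharper estimate \eqref{eq:tech}), whereas you discard them immediately as nonnegative.
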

	
\begin{proof}
	Let $\wt \cG_{\Int} = (\wt \cV_{\Int}, \wt \cE_{\Int})$ be the interior graph and  $\cE^b := \wt \cE \setminus \wt \cE_{\Int}$ the remaining edges. Then
 		\begin{align*}
		\sum_{e \in \wt \cE} c(e) |e| &= \sum_{ e \in \wt \cE} 1 - \sum_{v \in \wt \cV}   \frac{\mes(\cE_v \cap \wt \cE)}{ m(v)} - \sum_{T \in \cT} \frac{\mes( \cE_T \cap \wt \cE_{\Int})}  {p(T)} - \sum_{T \in \cT} \frac{\mes( \cE_T \cap \cE^b)}{p(T)}    \\
		&= \# \cE^b  +  \# \wt \cE_{\Int}   - \# \wt \cV_{\Int}  -   \# \cP(\wt \cG_{\Int})	\\
		& \; \; -   \sum_{v \in \partial \wt \cG} \frac{\mes(\cE_v \cap \wt \cE )}{ m(v)}- \sum_{T \in \cT, \cE_T \not \subseteq \wt \cE_{\Int}}  \frac{\mes( \cE_T \cap \wt \cE_{\Int})} {p(T)}  - \sum_{T \in \cT} \frac{\mes( \cE_T \cap \cE^b)}{p(T)}, 
	\end{align*}
	where $\cP(\wt \cG_{\Int})$ is the set of tiles $T \in \cT$ with $\cE_T \subseteq \wt \cE_{\Int}$. By Euler's formula (see, e.g., \cite{har})
	\[
		  \# \wt \cE_{\Int} - \# \wt \cV_{\Int}  - \#  \cF(\wt \cG_{\Int})  = - 1,
	\]
Because $\wt \cG$ is complete,  $\cF(\wt \cG_{\Int}) = \cP(\wt \cG_{\Int})$ and $\sum |e| c(e)$ is equal to
	\begin{align*}
		 & \# \cE^b -1 -  \sum_{v \in \partial \wt \cG} \frac{\mes(\cE_v \cap \wt \cE )}{ m(v)} - \sum_{T \in \cT, \cE_T \not \subseteq \wt \cE_{\Int}}  \frac{\mes( \cE_T \cap \wt \cE_{\Int})} {p(T)}  - \sum_{T \in \cT} \frac{\mes( \cE_T \cap \cE^b)}{p(T)}.
	\end{align*}
 Since $\wt \cG$ is star-like, there are no edges $e \in \wt \cE$ with both vertices in $\partial \wt \cG$. Therefore, $
  \# \cE^b = \deg( \partial \wt \cG)$ and the proof is complete.
\end{proof}

\begin{remark}

For future reference, observe that 
	\begin{align*}
  	\sum_{v \in \partial \wt \cG} & \frac{\mes(\cE_v \cap \wt \cE )}{ m(v)} + \sum_{T \in \cT, \cE_T \not \subseteq \wt \cE_{\Int}}  \frac{\mes( \cE_T \cap \wt \cE_{\Int})} {p(T)}  + \sum_{T \in \cT} \frac{\mes( \cE_T \cap \cE^b)}{p(T)} \\
	& \geq \sum_{v \in \partial \wt \cG} \frac{\deg_{\wt \cG} (v) }{ M(\cG)  } +   \sum_{T \in \cT, \cE_T \not \subseteq \wt \cE_{\Int}} \frac{ \# (\cE_T \cap \wt \cE_{\Int}) }{ P(\cG) }  +   \sum_{T \in \cT} \frac{ \# ( \cE_T \cap \cE^b )}{ P(\cG) }.
  \end{align*}
   This implies the following estimate
  \begin{equation} \label{eq:tech}
  \begin{split}
  	\sum_{e \in \wt \cE} c(e) |e| \leq&  \deg (\partial \wt \cG)\Big( 1 - \frac{1}{M(\cG)} - \frac{2}{P(\cG) }  \Big)  \\
	&- \frac{1}{P(\cG) }  \sum_{ e \in \wt \cE_{\Int}} \#  \big\{ T | \; e \in \cE_T \text{ and } \cE_T \not \subseteq \wt \cE_{\Int} \big\} 
	\end{split}
  \end{equation} 
  for every  star-like, complete subgraph $\wt \cG \in \cS(\cG)$.
\end{remark}

\begin{corollary}\label{cor:GB}
Let  $\cG = (\cV, \cE, |\cdot|)$ be  a finite tessellating metric graph, that is a finite graph satisfying all the assumptions of Section \ref{ss:II.01} except (iii) of Definition \ref{def:tess}. 
Then
	\begin{equation} \label{eq:GB1}
		\sum_{e \in \cE} - c(e)|e| = 1.
	\end{equation} 
\end{corollary}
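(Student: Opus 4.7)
The plan is to compute $\sum_{e \in \cE} c(e)|e|$ directly by expanding the definition \eqref{eq:charval} and swapping the order of summation, then apply Euler's formula for a connected finite planar graph.

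First I would write
\begin{equation*}
\sum_{e \in \cE} c(e)|e| = \sum_{e \in \cE} 1 - \sum_{e \in \cE}\sum_{v \colon v \in e} \frac{|e|}{m(v)} - \sum_{e \in \cE}\sum_{T \colon e \subseteq \partial T} \frac{|e|}{p(T)}
\end{equation*}
and swap the order of the last two sums. For the vertex sum, the inner sum over edges incident to a given $v$ is $\sum_{e \in \cE_v} |e|/m(v) = m(v)/m(v) = 1$ by definition \eqref{eq:def_m}, so the total vertex contribution is $\#\cV$. For the tile sum, I would use the convention that $1/p(T) = 0$ for the unbounded outer face (the only face not covered by (ii) of Definition \ref{def:tess}, since (iii) is dropped in the finite setting), so only bounded tiles contribute; for such $T$, $\sum_{e \in \cE_T} |e|/p(T) = 1$. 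Thus the tile contribution is $\#\cT_b$, where $\cT_b \subseteq \cT$ is the set of bounded tiles.

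This leaves
\begin{equation*}
\sum_{e \in \cE} c(e)|e| = \#\cE - \#\cV - \#\cT_b.
\end{equation*}
Now I would invoke Euler's formula: since $\cG_d$ is a finite, connected planar graph, $\#\cV - \#\cE + \#\cF = 2$, where $\cF = \cT_b \cup \{T_\infty\}$ includes the single unbounded outer face, so $\#\cF = \#\cT_b + 1$. Rearranging gives $\#\cE - \#\cV - \#\cT_b = -1$, hence
\begin{equation*}
\sum_{e \in \cE} -c(e)|e| = 1,
\end{equation*}
as claimed.

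The only subtle point, and the one I would be careful about, is the treatment of the unbounded face. Since condition (iii) of Definition \ref{def:tess} is explicitly dropped for finite tessellating graphs, there is exactly one unbounded tile whose boundary is a finite cycle of edges; nevertheless the convention in \eqref{eq:def_p} assigns it $p(T) = \infty$, and the convention below \eqref{eq:charval} then silently kills its contribution. This is precisely what makes the count of bounded tiles (rather than all tiles) match Euler's formula; otherwise the identity would fail by 1. Beyond that, the argument is a direct computation and swap of sums, following the same bookkeeping already used in the proof of Lemma \ref{lem:degsum}.
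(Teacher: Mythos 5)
Your proof is correct and follows essentially the same route as the paper: expand the definition of $c(e)$, swap the order of summation so the vertex and bounded-tile sums telescope to $\#\cV$ and the number of bounded faces respectively, and conclude with Euler's formula. The subtlety you flag about the unbounded face being killed by the convention $p(T)=\infty$ is exactly the point the paper itself highlights in Remark \ref{rem:GB}.
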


\begin{proof}	
	By Euler's formula
	\begin{align*}
		\sum_{e \in \cE} c(e)|e|  &= \# \cE - \sum_{v \in \cV} \frac{\mes(\cE \cap \cE_v) }{m(v)} - \sum_{T \in \cT}  \frac{\mes(\cE \cap \cE_T) }{ p(T) } \\
						&= \# \cE - \# \cV - \# \cF(\cG) = -1. \qedhere
	\end{align*}
\end{proof}	
\begin{remark} \label{rem:GB}
Formula \eqref{eq:GB1}  can be seen as the analogue of the combinatorial Gauss--Bonnet formula known for combinatorial graphs (see \cite[Proposition 1]{k11}). Let us also mention that the difference in the right-hand side arises from our convention  $p(T) = \infty$ for unbounded $T \in \cT$. 
\end{remark}

Theorem \ref{prop:average} now follows from Lemma \ref{lem:reduction} and \ref{lem:degsum} together with the inequality $\deg( \partial \wt \cG) \geq 1$ for $\wt \cG \in \cS(\cG)$. Moreover, we can already deduce (see \eqref{eq:est01} and \eqref{eq:c_ast_est1}) the basic estimate
\begin{equation} \label{eq:estbasic}
	\alpha(\cG) \geq c_\ast(\cG).
\end{equation}
By improving this bound further we finally obtain Theorem \ref{prop:est}.

\begin{proof}[Proof of Theorem \ref{prop:est}]
We start by providing a basic inequality. By \ref{rem:K}, we have $K(\cG) >0$. Let $\deg^\ast(\cG) := \sup_{v \in \cV} \deg(v)$. Then using \eqref{est:Mdeg} and \eqref{eq:c_ast_est1}, a lengthy but straightforward calculation implies
\begin{equation} \label{eq:fundamentalest}
	\frac{c_\ast(\cG)}{K(\cG)}  \leq \frac{\deg^\ast (\cG)-2}{\deg^\ast (\cG)-1} \frac{1}{\ell^\ast(\cG)}.
\end{equation}
Hence by Lemma \ref{lem:reduction}, it suffices to show that 
\begin{equation} \label{eq:toprove}
	\frac{\deg(\partial \wt \cG)}{\mes(\wt \cG)} \geq \frac{c_\ast(\cG)}{K(\cG)}
\end{equation}
for every $\wt \cG= (\wt \cV, \wt \cE) \in \cS(\cG)$.
 
 We will obtain \eqref{eq:toprove} by induction over $\#\wt \cV_{\Int} $. If $\# \wt \cV_{\Int}  = 1$, that is, $\wt \cV_{\Int}  = \{v\}$ for some $v\in\cV$, then $\wt \cG$ ``consists of a single star". More precisely, $\wt \cE = \cE_v$ and \eqref{eq:fundamentalest} implies
\[
	\frac{\deg(\partial \wt \cG)}{\mes(\wt \cG)} \geq \frac{\deg(v)}{\deg(v) \ell^\ast(\cG)} \geq  \frac{c_\ast(\cG)}{K(\cG)} .
\]
Now suppose $\# \wt \cV_{\Int}  = n \geq 2$ and \eqref{eq:toprove} holds for all $\widehat \cG \in \cS(\cG)$ with $ \#\widehat \cV_{\Int}  < n$. We distinguish two cases: 

(i) First, assume 
\[
\#\{u \in \partial \wt \cG| \; u \text{ is connected to } v \} \leq \deg^\ast (\cG)-2
\]
 for all $v \in \wt \cV_{\Int} $. In view of \eqref{eq:tech}, define 	
 \[
 \cE_i := \{e \in \wt \cE_{\Int}  |\; \# \{ T | \; e \in \cE_T \text{ and } \cE_T \not \subseteq  \wt \cE_{\Int}\} = i \}
 \]
for $i\in\{1,2\}$. Then
\begin{align*}
\sum_{ e \in \wt \cE_{\Int}} {\#\{ T | \; e \in \cE_T \text{ and } \cE_T \not \subseteq \wt \cE_{\Int} \} } = \# \cE_1 + 2 \# \cE_2 = \sum_{v \in \wt \cV_{\Int} } \delta(v) ,
\end{align*}
where $\delta(v):= \#( \cE_v \cap \cE_1) /2 +  \#( \cE_v \cap \cE_2)$ for all $v \in \cV$.

Now assume that $v \in \wt \cV_{\Int} $ and that $v$ is connected to at least one vertex in $ \partial \wt \cG$. Since $\wt \cG$ is star-like and $\# \wt \cV_{\Int}  \geq 2 $, $v$ is connected to another vertex in $\wt \cV_{\Int} $ and hence there exists an interior edge $e \in \wt \cE_{\Int} $ incident to $v$. Going through the edges incident to $v$ in counter-clockwise direction starting from $e$, denote by $e_+$ the ``last" edge incident to $v$ with $e_+ \in \wt \cE_{\Int} $. Define $e_-$ analogously by going clockwise. Then $e_\pm \in \cE_1 \cup \cE_2$. Moreover, if $e_+ = e_-$, then $e = e_+ = e_- \in \cE_2$. Thus $\delta(v) \geq 1$ for every such $v \in \wt \cV_{\Int} $. Since $\wt \cG$ is star-like,
\begin{align*}
	\sum_{v \in \wt \cV_{\Int} } \delta(v) &\geq \frac{1}{\deg^\ast (\cG)-2} \sum_{v \in \wt \cV_{\Int} }  \# \{u \in \partial \wt \cG|\; u \text{ is connected to } v\} \\
	&\geq \frac{1}{M (\cG)-2} \deg(\partial \wt \cG),
\end{align*}
	and \eqref{eq:toprove} follows from \eqref{eq:tech}. 
	
	(ii) Assume that $\#\{u \in \partial \wt \cG| \; u \text{ is connected to } v \} \geq \deg^\ast (\cG) -1 $ for some vertex $v \in \wt \cV_{\Int} $. Since $\# \wt \cV_{\Int} \geq 2$, this implies $\deg(v) = \deg^\ast (\cG)$ and that $v$ is connected to exactly one $w \in \wt \cV_{\Int} $. We ``cut out" the $\deg^\ast (\cG)-1$ edges between $v$ and $\partial \wt \cG$ and define $\widehat \cG = (\widehat \cV, \widehat \cE)$ by its edge set
	\[
		\widehat \cE = \wt \cE \setminus \{e \in \cE| \; e \text{ connects } v \text{ and } \partial \wt \cG \}.
	\]
	Then $\widehat \cG$ is again star-like and complete. Its interior graph $\widehat \cG_{\Int}  = (\widehat \cV_{\Int} , \widehat \cE_{\Int} )$ is given by $\widehat \cV_{\Int}  = \wt \cV_{\Int}  \setminus \{v\}$ and $\widehat \cE_{\Int}  = \wt \cE_{\Int}  \setminus \{e_{v,w}\}$,
	where $e_{v,w}$ is the edge between $v$ and $w$. In particular, $\widehat \cG$ satisfies \eqref{eq:toprove}. 
	
	Now assume $\eqref{eq:toprove}$ fails for $\wt \cG$. Then 
	\[
	\deg(\partial \wt \cG) (\mes(\wt \cG) - \mes(\widehat \cG)) \leq (\deg^\ast (\cG)-2)  \mes(\wt \cG)
	\]
	by \eqref{eq:fundamentalest}. Consequently,
	\begin{align*}
		\frac{\deg(\partial \widehat \cG)}{\mes(\widehat \cG)} = \frac{ \# \widehat \cE - \# \widehat \cE_{\Int}  }{\mes(\widehat \cG)} &= \frac{ \# \wt \cE - \# \wt \cE_{\Int}  - \deg^\ast (\cG) + 2 }{\mes(\widehat \cG)}  \\
		&= \frac{\deg(\partial \wt \cG) -\deg^\ast (\cG) +2}{\mes(\widehat \cG)} \leq \frac{\deg(\partial \wt \cG)}{\mes(\wt \cG)} < \frac{c_\ast(\cG)}{K(\cG) },
	\end{align*}
	which is a contradiction. 
\end{proof}


\section{Examples}\label{sec:examples}

In this section, we illustrate the use of our results in three examples.

\subsection{(p,q)-regular graphs} \label{ex:pq}
Let $p\in \Z_{\ge 3}$ and  $q\in \Z_{\ge 3}\cup \{\infty\}$. A tessellating (combinatorial) graph $\cG_d=(\cV, \cE)$ is called {\em $(p,q)$-regular}, if $\deg(v) =p$ for all $v \in \cV$ and $d_{\cT}(T)  = q$ for all $T \in \cT$. Let $\cG_{p,q}$ denote both the corresponding combinatorial graph and the associated equilateral  metric graph, that is,  we put $|e| \equiv 1$ for all $e\in\cE_{p,q} = \cE(\cG_{p,q})$. Notice that $\cG_{p,\infty}$ is an infinite $p$-regular tree $\T_p$ (also known as a {\em Cayley tree} or a {\em Bethe lattice}). 

Next, by \eqref{eq:charval},  we get 
\begin{equation}\label{eq:c_pq}
	c(e) = 1- \frac{2}{p} - \frac{2}{q} =: c_{p,q},
\end{equation}
for all $e \in \cE_{p,q}$, and the {\em vertex curvature} of the combinatorial graph $\cG_{p,q}$ (see for example \cite{h01, dVm, woe98}) is given by
\begin{align}\label{eq:kappa_pq}
	 \kappa(v) = 1 - \frac{\deg(v)}{2} + \sum_{T: v \in T} \frac{1}{  d_{\cT}(T) } = 1 - \frac{p}{2} + \frac{p}{q} = -\frac{p}{2}\,c_{p,q},
\end{align}
for all $ v \in \cV$. 

Since strictly positive vertex curvature implies that $\cG_d$ has only finitely many vertices (see \cite[Theorem 1.7]{dVm}), the characteristic value should satisfy $c_{p,q} \geq 0$.
 Clearly, $c_{p,q}=0$ exactly when $(p,q) \in \{ (4,4), (3,6), (6,3)\}$ and in these cases $\cG_{p,q}$ is isomorphic to the square, hexagonal or triangle lattice in $\R^2$. If $c_{p,q} >0$, then $\cG_{p,q}$ is isomorphic to the edge graph of a tessellation of the Poincar\'e disc ${\rm H}^2$ with regular $q$-gons of interior angle $2 \pi/p$ (see \cite[Remark 4.2.]{hjl} and \cite{iv}). In the latter case, Theorem \ref{prop:est} implies $\alpha(\cG_{p,q})> 0$ and the estimate 
\begin{align} \label{eq:estpq}
	\alpha(\cG_{p,q}) \geq  \frac{  q(p-2)c_{p,q} }{q(p-1)c_{p,q} + 1 } 
	=  \frac{p-2}{p-1}\times \begin{cases} \frac{1}{1+ (q(p-1)\,c_{p,q})^{-1}}, &q < \infty \\[2mm] 1, &q =  \infty\end{cases}. 
\end{align}
 Notice that in the case $q = \infty$, equality holds true in \eqref{eq:estpq} (see, e.g., \cite[Example 8.3]{kn17}).

 It is well-known that (see \cite{hs03}, \cite{hjl}),
\begin{equation} \label{eq:alphacomb}
	\alpha_{\comb} (\cG_{p,q}) = \frac{p-2}{p} \sqrt{1 - \frac{4}{(p-2)(q-2)}}.
\end{equation}
By (a slight modification of)  \cite[Lemma 4.1]{kn17}, 
\begin{align} \label{eq:combmetric}
	\alpha(\cG) = \frac{2 \alpha_{\comb} (\cG_d) }{\alpha_{\comb}( \cG_d )+1}
\end{align}
for every equilateral metric graph $\cG=(\cV, \cE, | \cdot |)$ with underlying combinatorial graph $\cG_d = (\cV, \cE)$. Hence
\be\label{eq:a_Gpg}
	\alpha(\cG_{p,q}) = \frac{p-2}{p-1 + \frac{p}{2} \left( \sqrt{ \frac{(p-2)(q-2)}{pq-2(p+q) }} - 1 \right)} 
	=  \frac{p-2}{p-1} \times 
	\begin{cases} \; \frac{  1 }{1 +  \delta (q(p-1)\, c_{p,q}) ^{-1}   },  &q < \infty \\[2 mm]1, &q =  \infty \end{cases},
	\ee
where 
\[
\delta := \frac{pq-2(p+q)}{2} \left( \sqrt{ 1 + \frac{4}{pq-2(p+q)}} - 1 \right) \leq 1.
\]
Comparing \eqref{eq:a_Gpg} with \eqref{eq:estpq}, we conclude that the error in the estimate \eqref{eq:estpq} is uniformly of order $\frac{1}{(pq)^2}$.

Finally, let us mention that using \eqref{eq:combmetric}, we can turn \eqref{eq:estpq} into a lower estimate for $\alpha_{\comb}(\cG_{p,q})$ as well. After a short calculation, we recover Theorem 1 from \cite{kp11},
\begin{equation}
	\alpha_{\comb}(\cG_{p,q}) \geq \frac{p-2}{p} \left( 1 - \frac{2}{(p-2)(q-2)-2} \right).
\end{equation}

\subsection{Another example} \label{ex:c}
Denote by $\Z^2_+ $ the square lattice of the upper half-plane, i.e.\ the combinatorial graph with vertex set $\Z \times  \Z_{\geq 0}$ and two vertices connected if and only if they are connected in the square lattice $\Z^2= \Z \times \Z$. Fix $k \in \Z_{\ge 3}$ and let $\cG_k$ be the graph obtained from $\Z_+^2$ by attaching to each vertex $v \in \Z \times \{ 0 \}$ an infinite $k$-regular tree (see Figure~\ref{fig:G1}). 

To assign edge lengths, we first define a partition of the edge set $\cE_{k}$.  We denote by $\cE_{k,{\rm tree}}$ the set of edges $e \in \cE_k$ belonging to one of the attached trees. Also,  let 
\[
 \cV_n = \{(z, n)| \; z \in \Z \} = \Z \times \{n\},\qquad n\in\Z,
 \]
be the vertices on the ``$n$-th horizontal line". 
For $n \in \Z_{\ge 0}$, we define $ \cE_{k,n}^+$ as the set of ``vertical" edges between the $n$-th horizontal line $\cV_n$ and the $(n+1)$-th horizontal line $\cV_{n+1}$, and $ \cE_{k,n}^-$ as the set of ``horizontal" edges connecting vertices in the $n$-th horizontal line $\cV_n$ (see Figure \ref{fig:G1}).  
Finally, we equip $ \cG_k$ with edge lengths in the following way:  
\be\label{eq:length2}
|e| = \begin{cases} 1, & e\in \cE_{k,{\rm tree}} \\[1mm] \frac{1}{(2n+2)^2}, & e\in \cE^-_{k,n} \\[1mm] \frac{1}{(2n+3)^2}, & e\in \cE^+_{k,n} \end{cases}.
\ee

\begin{figure} [ht]
	\begin{center}
		\begin{tikzpicture}    [scale=1.3]
		\foreach \x in {-3,...,3}
		\foreach \y in {0,...,2}
		{ \filldraw  ( \x ,\y) circle (1 pt);  }

	\foreach \y in {0,...,2}{
		\draw (-3.5,  \y) -- (3.5 ,  \y) ;
		\draw [ dotted] (-4,  \y) -- (-3.5 ,  \y) ;
		\draw [ dotted] (3.5,  \y) -- (4 ,  \y) ;
	}
	
	\draw (-4,  0) -- (4 ,  0) ;

	\foreach \x in {-3,...,3}{
		\draw ( \x, 0) -- (\x, 2.5) ;
		\draw [ dotted] ( \x, 2) -- (\x, 3) ;
	}

	\foreach \y in {0,1, 2}{
		\node [right] at (-4.6, \y ) { $  \cE_{k, \y}^-$};
		\node [right] at (-3.8, \y + 0.5 ) { $  \cE_{k, \y}^+$};
	}
	
	\node [right] at (-4.6, - 1 ) {$  \cE_{k,{\rm tree}}$};
	
	\foreach \x in {-3,...,3}{
	\foreach \y in {-1,...,1}{
	\foreach \z in {-1,1}{
	\foreach \w in {-1,1}{	
		\draw ( \x, 0) -- (\x -1/3*\y, -0.5) ;
		{ \filldraw (\x -1/3*\y, -0.5) circle (1 pt);  }
		\draw (\x -1/3*\y, -0.5) -- (\x -1/3*\y + 1/12*\z, -1) ;
		{ \filldraw (\x -1/3*\y + 1/12*\z, -1) circle (1 pt);  }
		\draw [densely dotted]  (\x -1/3*\y + 1/12*\z, -1) -- (\x -1/3*\y + 1/12*\z + 1/20*\w, -1.5) ;
	}}}}

		\end{tikzpicture}
	\end{center}
	\caption{$\cG_k$ for $k = 3$.} \label{fig:G1}
\end{figure}

Now let us compute the characteristic values. First of all, for all $e\in \cE_{k,{\rm tree}}$ we have the estimate
\[
 \inf_{e\in\cE_{k,{\rm tree}}} c(e) = 1 - \frac{2}{k} = \frac{k-2}{k}.
\]
Next, taking into account that $k\ge 3$, we get 
\[
	c(e) =  4 - \frac{2}{ k + 2\frac{1}{4} + \frac{1}{9} } - \frac{1}{\frac{1}{4} + 2\frac{1}{9}+\frac{1}{16} } = \frac{164}{77} - \frac{2}{ k + \frac{11}{18}} > 1
\]
for all $e \in \cE_{k,0}^-$, and 
\[ 
	c(e) = 9 - \frac{1}{ k + 2\frac{1}{4} + \frac{1}{9} } - \frac{1}{\frac{1}{9}+\frac{1}{25} + 2\frac{1}{16}} - \frac{2}{\frac{1}{4} + 2\frac{1}{9}+\frac{1}{16} }= \frac{8955}{ 5467}  -    \frac{1}{ k + \frac{11}{18}} > 1
\]
for $ e \in \cE_{k,0}^+$. 
Moreover, after lengthy but straightforward calculations one can see that
\[
c(e) > 1
\]
for all $e \in \cE_{k,n}^\pm$ with $n \geq 1$.
Thus we obtain
\be
	c_\ast(\cG_k) = \inf_{e\in \cE_{k,{\rm tree}}} c(e) = \frac{k-2}{k} >0,
\ee
and hence, by Theorem \ref{prop:est}, $\alpha(\cG_k)  >0$.  

Now let us compute $K(\cG_k)$. If $v \in \cV_n$ with $n \geq 1$, then
	\begin{align*}
		\sup_{v\in \cup_{n\ge 1}\cV_n} \frac{m(v)}{\inf_{e \in \cE_v}|e|} &= \sup_{n\ge 1}\,(2n+3)^2 \left( \frac{1}{(2n+1)^2} +  \frac{2}{(2n+2)^2} + \frac{1}{(2n +3)^2} \right) \\
								&=\left (1 + \frac{2}{3} \right)^2     + 2     \left (1 + \frac{1}{4} \right)^2   + 1  = \frac{497}{72} = 6.902\dot 7
	\end{align*}
	For $v \in \cV_0$, we obtain	\[
		 \frac{m(v)}{\inf_{e \in \cE_v}|e|}  = 9 \left( k + 2 \frac{1}{4} + \frac{1}{9} \right) = 9k + \frac{11}{2}
	\]
	Moreover, for the remaining vertices $v \in \cV$ belonging to one of the attached trees,
	\[
		\frac{m(v)}{\inf_{e \in \cE_v} |e|} = k.
	\]
	By assumption, $k \geq 3$ and hence $M(\cG_k) = 9k + \frac{11}{2}$. In addition, $P(\cG_k) = \infty$ since $\cT $ contains unbounded tiles. Thus we obtain
	\be\label{eq:K_Gk}
		K(\cG_k) = 1 - \frac{1}{M(\cG_k)} = \frac{18 k + 9}{18k + 11},
	\ee
	and Theorem \ref{prop:est} implies the lower estimate
	\[
		\alpha(\cG_k) \geq \frac{18 k +11}{18 k + 9} \frac{k-2}{k}.
	\]
	Our next goal is to derive an upper estimate. For $l \in \Z_{\ge 2}$, let $\wt \cG_l$ be the subgraph consisting of all edges in one of the attached trees between $\cV_0$ and $\cV_{-l}$.  Then it is straightforward to verify
	\begin{align*}
		& \mes(\wt \cG_l) = \sum_{j=0}^{l-1} k(k-1)^j = \frac{ k ( (k-1)^{l} - 1) }{k-2}  , &&\deg (\partial \wt \cG_l) = k + 3 + k(k-1)^{l-1},
	\end{align*}
	and as a consequence,
	\[
		\lim_{l \to \infty} \frac{ \deg (\partial \wt \cG_l)}{ \mes(\wt \cG_l) }= \frac{k-2}{k-1} = \alpha(\T_k),
	\]
	where $\T_k$ is the equilateral, $k$-regular tree (see Example \ref{ex:pq} or \cite[Example 8.3]{kn17}). This implies the two-sided estimate
	\[
	 	\frac{18 k +11}{18 k + 9} \frac{k-2}{k} \leq \alpha(\cG_k) \leq  \frac{k-2}{k-1}.
	\]
	In particular, $\alpha(\cG_k) \to 1$ for $k \to \infty$.
	
\begin{remark}
The two above examples demonstrate the use of Theorem \ref{prop:est} in two different situations. First of all, let us mention that by \cite[Corollary 4.4.]{kn17} the metric graph $\cG$ satisfies the strong isoperimetric inequality if $\ell^\ast(\cG)<\infty$ and the combinatorial isoperimetric constant $\alpha_{\comb}(\cG_d)$ is positive,
\[
 \alpha_{\comb}(\cG_d) >0. 
\]
In Example \ref{ex:pq}, the positivity of $\alpha_{\comb}(\cG_{p,q})$ is known (see \eqref{eq:alphacomb}) and hence it is a priori clear that $\alpha(\cG) > 0$. However,  Example \ref{ex:pq} shows that in certain situations Theorem \ref{prop:est} gives a good quantitative estimate.  

On the other hand, in Example \eqref{eq:alphacomb} we have $\alpha_{\comb}(\cG_k) = 0$ (since clearly $\alpha_{\comb}(\Z^2_+) = 0$), however, $\alpha(\cG) > 0$. In particular, Theorem \ref{prop:est} shows that the isoperimetric constants of the combinatorial and metric graph behave differently.
\end{remark}

\subsection{Non-equilateral $p$-regular trees} \label{ex:min}
We conclude with an example showing the use of Remark \ref{rem:min}.
For $p \in\Z_{\ge 5}$, let $\T_p$ be the equilateral, $p$-regular tree from Example \ref{ex:pq}.  Fix an edge $\hat e \in \cE(\T_{p}) $. In the following, we will consider $\T_p$ equipped with another choice of edge lengths.  Define the metric graph $\cT_p:=(\T_p,|\cdot|)$ by assigning 
\[
 |e| := \begin{cases} p, &e = \hat e \\[1mm] 1, &e \in \cE(\T_{p}) \setminus \{ \hat e \} \end{cases}.
\]

Let $\wt \cG \in \cS(\cT_p)$ be a star-like complete subgraph. If $\hat e \notin \wt \cE$, then $\mes (\wt \cG) = \# \wt\cE $. If $\hat e \in \wt \cE$, then $\wt \cV_{\Int} \neq \varnothing$ since $\wt \cG$ is star-like. Hence
\[
	\mes(\wt \cG) =\# \wt\cE+ p-1 \le 2 \# \wt\cE.
\]
 Thus we conclude from \eqref{eq:a_Gpg} and Lemma \ref{lem:reduction} that
\[
	\alpha_\cS( \cT_p)  = \inf_{\wt \cG \in \cS }  \frac{\deg ( \partial \wt \cG)}{ \mes(\wt \cG)} \geq \frac{1}{2}  \inf_{\wt \cG \in \cS }  \frac{\deg ( \partial \wt \cG)}{\#\wt\cE} = \frac{1}{2} \alpha( \T_p) = \frac{1}{2} \frac{p-2}{p-1} \ge \frac{2}{5}
\]
for all $p\ge 6$. 
On the other hand, $\ell^\ast(\cT_p) = p \geq 6$ by assumption. Hence Remark \ref{rem:min} implies
\[
	\alpha(  \cT_p) = \frac{2}{\ell^\ast (  \cT_p)} = \frac{2}{p}.
\]


\noindent
\ack I thank Aleksey Kostenko for helpful discussions throughout the preparation of this article and Delio Mugnolo for useful comments and hints with respect to the literature.

\end{document}